\newtheorem{thm}{Theorem}
\newtheorem{lem}[thm]{Lemma}
\theoremstyle{definition}
\newtheorem*{rem}{Remark}
\newcommand{\s}{\sigma}
\newcommand{\la}{\lambda}
\begin{document}
\title{A note on [D. G. Higman 1988 : Strongly regular designs and coherent configurations of type
  {$[{3\atop {\;}}\;\;{2\atop 3}]$}]}
\author{Akihide Hanaki}
\date{Department of Mathematics, Faculty of Science,\\
  Shinshu University, Matsumoto 390-8621, Japan\\
  e-mail : hanaki@shinshu-u.ac.jp}
\maketitle

\begin{abstract}
  In 1988, D. G. Higman gave properties of parameters of strongly regular designs without proof.
  It included an error.
  We will give proofs and correct the error.
\end{abstract}

In 1988, D. G. Higman gave properties of parameters of strongly regular designs without proof in \cite{Higman1988}.
He wrote ``The details, which are routine, will be omitted here''.
In 2014 at Villanova, 
Mikhail Klin said to the author that Higman's paper included an error.
We will give proofs and correct the error.

This short note was first written in 2016.
The author would like to submit this
by a request of Mikhail Klin and Alyssa Sankey. 

We will consider the following fifteen equations in \cite[page 414]{Higman1988}.
\begin{enumerate}[(1)]
  \item $f_1=f_2$,
  \item $n_1 S_2=n_2 S_1$,
  \item $P_1(n_1-S_1)=(k_1-N_1)S_1$,
  \item $a_2 k_1=N_1 S_2$,
  \item $b_2\ell_1=(S_1-N_1-1)S_2$,
  \item ${N_1}^2+P_1(k_1-N_1)=k_1+\lambda_1 N_1+\mu_1(S_1-N_1-1)$,
  \item $N_1 P_1+P_1(k_1-P_1)=\lambda_1 P_1+\mu_1(S_1-P_1)$,
  \item $N_1 a_2+P_1(S_2-a_2)=S_2+a_2\lambda_1+b_2(k_1-\lambda_1-1)$,
  \item $N_1 b_2+P_1(S_2-b_2)=a_2\mu_1+b_2(k_1-\mu_1)$,
  \item $S_1+a_1 N_2+b_1(S_2-N_2-1)=S_2+a_2 N_1+b_2(S_1-N_1-1)$,
  \item $a_1 P_2+b_1(S_2-P_2)=a_2 P_1+b_2(S_1-P_1)$,
  \item $P_1(k_2-N_2)=P_2(k_1-N_1)$,
  \item $S_1+a_1 k_2+b_1\ell_2=S_2+a_2 k_1+b_2 \ell_1=S_1 S_2$,
  \item $S_1+a_1 r_2-b_1(r_2+1)=S_2+a_2 r_1-b_2 (r_1+1)$,
  \item $S_1+a_1 s_2-b_1(s_2+1)=S_2+a_2 s_1-b_2 (s_1+1)=0$.\footnote{%
  In \cite{Higman1988}, (15) is ``$S_1+a_1 S_2-b_1(s_2+1)=S_2+a_2 S_1-b_2 (s_1+1)=0$'' and this is incorrect.}
\end{enumerate}
We use the same notation in \cite{Higman1988} or \cite{Higman1987}.
However, the notation $f_i$ was used redundantly, relations and multiplicities, in \cite{Higman1988}.
We use $f_i$ for multiplicities and $\s_i$ for relations.
We also use $\s_i$ for the adjacency matrices of the relations.

\begin{lem}
  Equations (1) and (2) hold.
\end{lem}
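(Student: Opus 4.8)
The plan is to obtain (2) from a one-line double count and (1) from the structure of the coherent algebra, via the rank of the off-diagonal incidence matrix.

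For (2): let $\sigma\subseteq X_1\times X_2$ be one of the two relations of the configuration in the $(1,2)$-block, with adjacency matrix $N$, an $n_1\times n_2$ zero--one matrix. Coherence makes the row and column sums of $N$ constant; writing $N\mathbf 1_{n_2}=S_2\mathbf 1_{n_1}$ and $N^{\top}\mathbf 1_{n_1}=S_1\mathbf 1_{n_2}$ --- Higman's convention, in which $S_i$ is the number of elements of $X_i$ joined to a fixed element of the other fibre --- I would evaluate $\mathbf 1_{n_1}^{\top}N\mathbf 1_{n_2}$ in the two obvious ways to get $n_1S_2=n_2S_1$.

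For (1): write $W=\bigoplus_{i,j}W_{ij}$ with $\dim W_{11}=\dim W_{22}=3$, $\dim W_{12}=\dim W_{21}=2$, and let $A_i$ be the adjacency matrix of the strongly regular graph on $X_i$. Multiplicative closure of $W$ puts $NN^{\top}$ in the Bose--Mesner algebra of $(X_1,A_1)$, $N^{\top}N$ in that of $(X_2,A_2)$, and $A_1N,\,NA_2\in W_{12}=\langle N,\,J_{n_1\times n_2}-N\rangle$. From $A_1N=\alpha N+\beta J_{n_1\times n_2}$ (and its transpose) one sees that $N$ sends $\mathbf 1_{n_2}^{\perp}$ into a single eigenspace of $A_1$ and $N^{\top}$ sends $\mathbf 1_{n_1}^{\perp}$ into a single eigenspace of $A_2$; ruling out the two degenerate possibilities (image $0$, which forces $N\in\{0,J\}$, and image everything, which forces $A_i$ to have only two eigenvalues) leaves exactly one non-principal eigenvalue of each $A_i$ not annihilated by $\sigma$. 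Using in addition that $N^{\top}N$ lies in the Bose--Mesner algebra of $A_2$, I would deduce that $N^{\top}N$ annihilates the \emph{other} non-principal eigenspace of $A_2$ entirely, whence $\operatorname{rank}N=\operatorname{rank}(N^{\top}N)=1+f_2$ with $f_2$ the multiplicity of the surviving non-principal eigenvalue of $A_2$; the symmetric computation with $NN^{\top}$ gives $\operatorname{rank}N=1+f_1$, so $f_1=f_2$. (Equivalently: in $W\cong M_2(\C)\oplus M_2(\C)\oplus\C\oplus\C$ the non-principal $2$-dimensional constituent fuses a non-principal eigenspace of $A_1$ to one of $A_2$ through $N$, hence occurs in $\C^{n_1}$ and $\C^{n_2}$ with equal multiplicity.)

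I expect (2) to be immediate; the work is in (1), where the delicate point is showing that \emph{exactly one} non-principal eigenvalue of each graph survives $\sigma$ --- equivalently, that the linking $2$-dimensional constituent is unique --- which is exactly where $\dim W_{12}=2$ is used. Once that holds, $\operatorname{rank}N=\operatorname{rank}N^{\top}$ closes the argument.
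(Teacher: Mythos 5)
Your proof is correct, and for (2) it is literally the paper's argument: the paper counts the $1$'s in $\sigma_7$ two ways, which is your evaluation of $\mathbf 1^{\top}N\mathbf 1$. For (1) the paper offers only the one-line assertion ``since $|\mathscr{I}^{14}|=2$, we may assume $f_1=f_2$,'' i.e.\ it invokes the structure theory of the coherent algebra (exactly two irreducible constituents are supported on both fibres, the principal one and one linking $2\times 2$ block, so a nonprincipal eigenspace of $A_1$ is fused to one of $A_2$ with equal multiplicity); that is precisely the parenthetical at the end of your argument. Your main body replaces this with an elementary, self-contained computation: $A_1N=\alpha N+\beta J$ forces $N(\mathbf 1^{\perp})$ into a single nonprincipal eigenspace, $N^{\top}N$ lies in the Bose--Mesner algebra of $A_2$ and kills the other eigenspace, and $\operatorname{rank}N=1+f_1=1+f_2$. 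This buys a genuine proof of the ``routine detail'' Higman and the note both omit, at the cost of having to rule out the degenerate cases $N\in\{0,J\}$ and a two-eigenvalue $A_i$ by hand (both of which are excluded anyway by $\dim W_{12}=2$ and $\dim W_{ii}=3$). The only convention to keep in mind is the same one the paper's ``we may assume'' encodes: the surviving eigenvalue is the one labelled $r_i$, so that $f_i$ is its multiplicity.
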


\begin{proof}
  Since $|\mathscr{I}^{14}|=2$, we may assume that $f_1=f_2$.
  Counting $1$'s in $\s_7$, we have (2).
\end{proof}

\section{Table of multiplications}\label{sec:tbl}
It is not difficult to make a table of multiplications.
{\small
$$\begin{array}{|l||l|l|l|}
    \hline
    &\s_1&\s_2&\s_3\\
    \hline
    \hline
    \s_1 &\s_1&\s_2&\s_3\\
    \hline
    \s_2& \s_2&k_1\s_1+\la_1\s_2+\mu_1\s_3&(k_1-\la_1-1)\s_2+(k_1-\mu_1)\s_3\\
    \hline
    \s_3&\s_3&(k_1-\la_1-1)\s_2+(k_1-\mu_1)\s_3
              &(n_1-k_1-1)\s_1+(n_1-2k_1+\la_1)\s_2+(n_1-2k_1+\mu_1-2)\s_3\\
    \hline
  \end{array}$$
    
$$\begin{array}{|l||l|l|}
    \hline
    &\s_7&\s_8\\
    \hline
    \hline
    \s_1 &\s_7&\s_8\\
    \hline
    \s_2& N_1\s_7+P_1\s_8& (k_1-N_1)\s_7+(k_1-P_1)\s_8\\
    \hline
    \s_3&(S_1-N_1-1)\s_7+(S_1-P_1)\s_8&(n_1-S_1-k_1+N_1)\s_7+(n_1-S_1-k_1+P_1-1)\s_8\\
    \hline
  \end{array}$$

  $$\begin{array}{|l||l|l|l|}
    \hline
    &\s_4&\s_5&\s_6\\
    \hline
    \hline
    \s_4 &\s_4&\s_5&\s_6\\
    \hline
    \s_5& \s_5&k_2\s_4+\la_2\s_5+\mu_2\s_6&(k_2-\la_2-1)\s_5+(k_2-\mu_2)\s_6\\
    \hline
    \s_6&\s_6&(k_2-\la_2-1)\s_5+(k_2-\mu_2)\s_6
              &(n_2-k_2-1)\s_4+(n_2-2k_2+\la_2)\s_5+(n_2-2k_2+\mu_2-2)\s_6\\
    \hline
  \end{array}$$
    
$$\begin{array}{|l||l|l|}
    \hline
    &\s_9&\s_{10}\\
    \hline
    \hline
    \s_4 &\s_9&\s_{10}\\
    \hline
    \s_5& N_2\s_9+P_2\s_{10} & (k_2-N_2)\s_9+(k_2-P_2)\s_{10}\\
    \hline
    \s_6&(S_2-N_2-1)\s_9+(S_2-P_2)\s_{10}&(n_2-k_2-S_2+N_2)\s_9+(n_2-k_2-S_2+P_2-1)\s_{10}\\
    \hline
  \end{array}$$
  
$$\begin{array}{|l||l|l|l|}
    \hline
    &\s_4&\s_5&\s_6\\
    \hline
    \hline
    \s_7 &\s_7&N_2\s_7+P_2\s_8&(S_2-N_2-1)\s_7+(S_2-P_2)\s_8\\
    \hline
    \s_8& \s_8&(k_2-N_2)\s_7+(k_2-P_2)\s_8&(n_2-k_2-S_2+N_2)\s_7+(n_2-k_2-S_2+P_2-1)\s_8\\
    \hline
  \end{array}$$
    
$$\begin{array}{|l||l|l|}
    \hline
    &\s_9&\s_{10}\\
    \hline
    \hline
    \s_7 &S_2\s_1+a_2\s_2+b_2\s_3 & (S_2-a_2)\s_2+(S_2-b_2)\s_3\\
    \hline
    \s_8& (S_2-a_2)\s_2+(S_2-b_2)\s_3&(n_2-S_2)\s_1+(n_2+a_2)\s_2+(n_2+b_2)\s_3\\
    \hline
  \end{array}$$

$$\begin{array}{|l||l|l|l|}
    \hline
    &\s_1&\s_2&\s_3\\
    \hline
    \hline
    \s_9&\s_9&N_1\s_9+P_1\s_{10}&(S_1-N_1-1)\s_9+(S_1-P_1)\s_{10}\\
    \hline
    \s_{10}&\s_{10}&(k_1-N_1)\s_9+(k_1-P_1)\s_{10}&(n_1-k_1-S_1+N_1)\s_9+(n_1-k_1-S_1+P_1-1)\s_{10}\\
    \hline
  \end{array}$$
    
$$\begin{array}{|l||l|l|}
    \hline
    &\s_7&\s_{8}\\
    \hline
    \hline
    \s_9 &S_1\s_4+a_1\s_5+b_1\s_6&(S_1-a_1)\s_5+(S_1-b_1)\s_6\\
    \hline
    \s_{10}&(S_1-a_1)\s_5+(S_1-b_1)\s_6 &(n_1-S_1)\s_4+(n_1+a_1)\s_5+(n_1+b_1)\s_6\\
    \hline
  \end{array}$$
}
Other products are zero.
We can read all intersection numbers $p_{ij}^k$ by the table.

Valencies are as follows.
$$\begin{array}{lcl|lcl}
    v_1 &=& 1 & v_7 &=& S_2 \\
    v_2 &=& k_1 & v_8 &=& n_2-S_2 \\
    v_3 &=& \ell_1=n_1-k_1-1 &&\\
    \hline
    v_9 &=& S_1 & v_4 &=& 1\\
    v_{10} &=& n_1-S_1 & v_5 &=& k_2 \\
        && & v_6 &=& \ell_2=n_2-k_2-1\\
  \end{array}$$

\section{By intersection numbers}\label{sec:intnum}
By \cite[3.6 (vi)]{Higman1988}, we have
$$p_{ij}^k v_k=p_{kj^*}^{i}v_i.$$

\begin{lem}
  Equations (3), (4), and (5) hold.
\end{lem}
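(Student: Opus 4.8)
The plan is to read all three equations directly off the multiplication table of Section~\ref{sec:tbl}, using the reciprocity relation $p_{ij}^k v_k=p_{kj^*}^i v_i$ recorded above. Before applying it I would pin down the involution $*$ on indices. The relations $\s_1,\s_2,\s_3$ (and $\s_4,\s_5,\s_6$) are symmetric: $\s_1$ is an identity relation, and $\s_2,\s_3$ are the adjacency relations of the strongly regular graph on the first fibre and of its complement. The remaining relations pair off under transposition; in particular $\s_9^*=\s_7$, which one sees by noting that the transpose of $\s_9$ has the shape of $\s_7$ and that its valency --- the column sum of $\s_9$ --- equals $n_2S_1/n_1=S_2=v_7$ by the already-proved equation~(2). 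Thus, for our purposes, $\s_2^*=\s_2$, $\s_3^*=\s_3$, and $\s_9^*=\s_7$.

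For (3) I would take $(i,j,k)=(9,2,10)$. From the block of the table with rows $\s_9,\s_{10}$ and columns $\s_1,\s_2,\s_3$ one reads $p_{9,2}^{10}=P_1$ and $p_{10,2}^{9}=k_1-N_1$; with $v_9=S_1$ and $v_{10}=n_1-S_1$ the relation becomes $P_1(n_1-S_1)=(k_1-N_1)S_1$. For (4) and (5) I would take $i=7$ and $j=9$ (so that $j^*=7$), with $k=2$ and $k=3$ respectively. The entry $\s_7\s_9=S_2\s_1+a_2\s_2+b_2\s_3$ gives $p_{7,9}^{2}=a_2$ and $p_{7,9}^{3}=b_2$, while $\s_2\s_7=N_1\s_7+P_1\s_8$ and $\s_3\s_7=(S_1-N_1-1)\s_7+(S_1-P_1)\s_8$ give $p_{2,7}^{7}=N_1$ and $p_{3,7}^{7}=S_1-N_1-1$. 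Using $v_2=k_1$, $v_3=\ell_1$ and $v_7=S_2$, the relation yields $a_2k_1=N_1S_2$ and $b_2\ell_1=(S_1-N_1-1)S_2$.

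None of this is deep; the entire content is bookkeeping, so the step most likely to trip one up is the identification of the $*$-images --- and, relatedly, keeping straight which products $\s_i\s_j$ are actually defined (the two fibres must be compatible). Once that is in hand, each of (3), (4), and (5) falls out of a single instance of the reciprocity relation together with three entries of the table.
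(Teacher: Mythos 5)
Your proof is correct and is essentially identical to the paper's: the author also derives (3) from $p_{92}^{10}v_{10}=p_{10,2}^{9}v_9$, (4) from $p_{79}^{2}v_2=p_{27}^{7}v_7$, and (5) from $p_{79}^{3}v_3=p_{37}^{7}v_7$, reading the same intersection numbers off the multiplication table. The only difference is that you spell out the justification of $\s_9^*=\s_7$ (which in Higman's setup is essentially definitional, $\s_9$ being the converse of the incidence relation $\s_7$), while the paper leaves it implicit.
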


\begin{proof}
  The equation (3) holds by $p_{92}^{10}v_{10}=p_{10,2}^9 v_9$,
  (4) by $p_{79}^2v_2=p_{27}^7v_7$, and (5) by $p_{79}^3v_3=p_{37}^7v_7$.
\end{proof}

\section{By regular representations}\label{sec:regrep}
Let $M$ be a right regular representation of the adjacency algebra,
namely $M:\s_i\mapsto (p_{si}^t)_{st}$.\footnote{%
  In \cite{Higman1988}, $M_i^a$ is considered. It is a part of the regular representation
  omitting zero parts.}
We can read intersection numbers $p_{st}^u$ from the table of multiplications in \S \ref{sec:tbl}.

\begin{lem}
  Equations (6), (7), (8), (9), (10), (11), and (12) hold.
\end{lem}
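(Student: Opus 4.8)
The plan is to exploit the associativity of the adjacency algebra, read off through the right regular representation $M$. Since $M$ is a ring homomorphism, for any relations $\sigma_i,\sigma_j,\sigma_k$ we have $M(\sigma_i)M(\sigma_j) = M(\sigma_i\sigma_j)$, and comparing the $(\sigma_k,\sigma_\ell)$-entries of both sides gives a polynomial identity among intersection numbers. All seven equations (6)–(12) should fall out by choosing the right triple of relations and the right matrix entry, with every intersection number involved already tabulated in \S\ref{sec:tbl}.

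Concretely, I would proceed as follows. For (6) and (7), work inside the fibre over $\sigma_1$: compute $\sigma_2\sigma_2\sigma_2$ two ways, i.e. compare $(\sigma_2 \cdot \sigma_2)\cdot\sigma_2$ with $\sigma_2\cdot(\sigma_2\cdot\sigma_2)$ using the first multiplication table together with the $\sigma_7,\sigma_8$ table — the $\sigma_7$-coefficient on each side yields one of (6)/(7) and the $\sigma_8$-coefficient the other (equivalently, expand $\sigma_7\sigma_2\sigma_2$ or $\sigma_2\sigma_2\sigma_7$ and match coefficients of $\sigma_7$). For (8) and (9), do the analogous computation mixing the two strongly regular structures: expand $\sigma_7\sigma_2\sigma_5$ (or $\sigma_2\sigma_7\sigma_5$) in two orders using the $\{\sigma_2,\sigma_3\}$-table, the $\{\sigma_7,\sigma_8\}$-with-$\{\sigma_4,\sigma_5,\sigma_6\}$ table, and the $\{\sigma_7,\sigma_8\}$-with-$\{\sigma_9,\sigma_{10}\}$ table; matching coefficients of $\sigma_7$ and of $\sigma_8$ gives (8) and (9). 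Equations (10) and (11) come from $\sigma_9\sigma_7$ computed against the cross tables: expand $\sigma_9\cdot\sigma_7\cdot(\text{suitable }\sigma)$ so that associativity forces the balance of the ``$S_1+a_1(\cdots)+b_1(\cdots)$'' expression against the ``$S_2+a_2(\cdots)+b_2(\cdots)$'' expression, with the $P_2/P_1$ version giving (11). Finally (12) follows from comparing the $\sigma_9$- and $\sigma_{10}$-coefficients in $\sigma_9\sigma_2$ versus $\sigma_9\sigma_5$ routed through $\sigma_{10}$, using the last two tables; it is essentially the statement that the two expressions $P_1(k_2-N_2)$ and $P_2(k_1-N_1)$ are the off-diagonal entry of the same product matrix.

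Alternatively, and perhaps more cleanly, each of (6)–(12) is an instance of associativity $(\sigma_i\sigma_j)\sigma_k = \sigma_i(\sigma_j\sigma_k)$ with $i,j,k$ drawn from $\{1,2,3,7,8\}$ or its ``primed'' counterpart together with one of the bridge relations $\sigma_7,\sigma_8,\sigma_9,\sigma_{10}$; one simply reads the two tables that apply, expands, and equates the coefficient of a single basis relation. The only bookkeeping subtlety is keeping track of which valency-$S_i$ versus valency-$(n_i-S_i)$ column one is matching, since swapping $\sigma_7\leftrightarrow\sigma_8$ (or $\sigma_9\leftrightarrow\sigma_{10}$) interchanges the $N$- and $P$-type parameters and would produce a superficially different but equivalent identity.

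The main obstacle I anticipate is purely organisational rather than conceptual: there are $2^?$ choices of triple and of target coefficient, and one must pick, for each of the seven equations, the combination that reproduces it \emph{on the nose} in Higman's normalisation. I expect that tabulating $M(\sigma_2)$, $M(\sigma_5)$, $M(\sigma_7)$, $M(\sigma_9)$ explicitly as $10\times 10$ matrices (most blocks zero, by the ``other products are zero'' remark), multiplying them in both orders, and reading off the indicated entries, mechanically discharges all of (6)–(12); since the paper itself declares these computations ``routine'', I will present the key products and the entry comparisons, leaving the final arithmetic simplifications to the reader.
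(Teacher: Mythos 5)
Your method is exactly the paper's: the lemma is proved by noting that the right regular representation is an algebra homomorphism, so $M(\sigma_i)M(\sigma_j)=\sum_k p_{ij}^k M(\sigma_k)$, and each of (6)--(12) is the comparison of one entry of such a product, i.e.\ one instance of associativity $(\sigma_s\sigma_i)\sigma_j=\sigma_s(\sigma_i\sigma_j)$ read off against the tables of \S\ref{sec:tbl}. The paper's specific choices are $(M(\sigma_2)M(\sigma_2))_{99}$ and $(M(\sigma_2)M(\sigma_2))_{9,10}$ for (6),(7); $(M(\sigma_7)M(\sigma_9))_{22}$, $(M(\sigma_7)M(\sigma_9))_{23}$, $(M(\sigma_7)M(\sigma_9))_{99}$, $(M(\sigma_7)M(\sigma_9))_{9,10}$ for (8)--(11); and $(M(\sigma_9)M(\sigma_2))_{59}$ for (12).

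One concrete misassignment in your plan would derail the computation for (8) and (9): the triple $\sigma_2\sigma_7\sigma_5$ cannot produce these identities, because the parameters $a_2,b_2$ occur only in the product $\sigma_7\sigma_9$, which that triple never invokes. Indeed, comparing $(\sigma_2\sigma_7)\sigma_5$ with $\sigma_2(\sigma_7\sigma_5)$ yields $N_1N_2+P_1(k_2-N_2)=N_1N_2+P_2(k_1-N_1)$ in both the $\sigma_7$- and $\sigma_8$-coefficients, i.e.\ equation (12) twice over. What you need for (8),(9) is the triple $(\sigma_2\sigma_7)\sigma_9=\sigma_2(\sigma_7\sigma_9)$, whose $\sigma_2$- and $\sigma_3$-coefficients give $N_1a_2+P_1(S_2-a_2)=S_2+a_2\lambda_1+b_2(k_1-\lambda_1-1)$ and $N_1b_2+P_1(S_2-b_2)=a_2\mu_1+b_2(k_1-\mu_1)$ on the nose. (Similarly, for (12) note that $\sigma_9\sigma_5=0$; the relevant associativity is $(\sigma_5\sigma_9)\sigma_2=\sigma_5(\sigma_9\sigma_2)$.) With those substitutions your argument coincides with the paper's.
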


\begin{proof}
  We consider the equation $M(\s_i)M(\s_j)=\sum_{k=1}^9 p_{ij}^k M(\s_k)$.
  The equation (6) is from $(M(\s_2)M(\s_2))_{99}$,
  (7) from $(M(\s_2)M(\s_2))_{9,10}$,
  (8) from $(M(\s_7)M(\s_9))_{22}$,
  (9) from $(M(\s_7)M(\s_9))_{23}$,
  (10) from $(M(\s_7)M(\s_9))_{99}$, 
  (11) from $(M(\s_7)M(\s_9))_{9,10}$, and
  (12) is from $(M(\s_9)M(\s_2))_{59}$.
\end{proof}

\section{By characters}\label{sec:char}
By definition, $\{\s_1,\s_2,\s_3\}$ forms an $(n_1,k_1;\lambda_1,\mu_1)$-SRG,
and $\{\s_4,\s_5,\s_6\}$ forms an $(n_2,k_2;\lambda_2,\mu_2)$-SRG.
The character tables are
$$
\begin{array}{c|ccc|c}
  &&&&\\
  \hline
  \chi_i&1&k_i&\ell_i&1\\
  \varphi_i&1&r_i&-1-r_i&f_i\\
  \psi_i&1&s_i&-1-s_i&g_i
\end{array}
$$
for $i=1,2$.
We are assuming that $f_1=f_2$ in (1).
Then the character table of the coherent configuration is
$$
\begin{array}{c||ccc|ccc||c}
  &\s_1&\s_2&\s_3&\s_4&\s_5&s_6&\\
  \hline
  \chi=\chi_1+\chi_2&1&k_1&\ell_1&1&k_2&\ell_2&1\\
  \varphi=\varphi_1+\varphi_2&1&r_1&-1-r_1&1&r_2&-1-r_2&f_1=f_2\\
  \psi_1&1&s_1&-1-s_1&0&0&0&g_1\\
  \psi_2&0&0&0&1&s_2&-1-s_2&g_2
\end{array}
$$

\begin{lem}
  Equations (13), (14), and (15) hold.
\end{lem}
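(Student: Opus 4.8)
The plan is to use the single structural fact that every character $\zeta$ of the adjacency algebra is the trace of a representation, hence $\zeta(\s_7\s_9)=\zeta(\s_9\s_7)$. From the multiplication table of \S\ref{sec:tbl} one reads off
$$\s_7\s_9=S_2\s_1+a_2\s_2+b_2\s_3,\qquad \s_9\s_7=S_1\s_4+a_1\s_5+b_1\s_6,$$
so each of $\chi,\varphi,\psi_1,\psi_2$ produces one identity by expanding the two sides with the character table above. The reason (15) lands on $0$ is that $\chi$ and $\varphi$ take nonzero values on both fibers, whereas $\psi_1$ vanishes on $\s_4,\s_5,\s_6$ and $\psi_2$ vanishes on $\s_1,\s_2,\s_3$, so $\psi_1(\s_9\s_7)=0$ and $\psi_2(\s_7\s_9)=0$.

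Carrying this out: $\chi$ gives $S_2+a_2k_1+b_2\ell_1=S_1+a_1k_2+b_1\ell_2$, the first equality in (13); $\varphi$ gives $S_2+a_2r_1-b_2(r_1+1)=S_1+a_1r_2-b_1(r_2+1)$, which is (14); for $\psi_1$ one has $\psi_1(\s_7\s_9)=S_2+a_2s_1-b_2(s_1+1)$ while $\psi_1(\s_9\s_7)=0$, hence $S_2+a_2s_1-b_2(s_1+1)=0$; and symmetrically $\psi_2$ gives $S_1+a_1s_2-b_1(s_2+1)=0$. The last two together are (15). Note that this computation produces the eigenvalues $s_1,s_2$ (not $S_1,S_2$) in the leading terms, which is precisely the correction to Higman's statement flagged in the footnote; one should just confirm at the end that the subscripts emerge this way rather than as in the original.

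It remains to see that the two middle expressions in (13) equal $S_1S_2$. For this I would evaluate on the concrete standard module. Let $j_1\in\C^{n_1}$ and $j_2\in\C^{n_2}$ be the all-ones vectors. Since $v_7=S_2$ and $v_9=S_1$ (valency list), we have $\s_7 j_2=S_2 j_1$ and $\s_9 j_1=S_1 j_2$, so $\s_7\s_9\,j_1=S_1S_2\,j_1$; comparing with $\s_7\s_9=S_2\s_1+a_2\s_2+b_2\s_3$ applied to $j_1$ (using $\s_1 j_1=j_1$, $\s_2 j_1=k_1 j_1$, $\s_3 j_1=\ell_1 j_1$) gives $S_2+a_2k_1+b_2\ell_1=S_1S_2$, and the mirror computation with $\s_9\s_7$ at $j_2$ gives $S_1+a_1k_2+b_1\ell_2=S_1S_2$. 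This completes (13). Equivalently, $\langle j_1,j_2\rangle$ is an irreducible submodule affording $\chi$, and the above is just $\chi(\s_7\s_9)$ read in the basis $j_1,j_2$.

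The argument is essentially bookkeeping between the two tables together with the trace property of characters. The only step needing a little care is the value $S_1S_2$: one must identify the representation affording $\chi$ with the action on the span of the all-ones vectors and keep the valencies $v_7,v_9$ pointing in the correct direction. Beyond that I do not expect a genuine obstacle.
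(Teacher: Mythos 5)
Your proof is correct and follows exactly the paper's route: evaluating the trace identity $\zeta(\s_7\s_9)=\zeta(\s_9\s_7)$ for $\zeta\in\{\chi,\varphi,\psi_1,\psi_2\}$ against the products $\s_7\s_9=S_2\s_1+a_2\s_2+b_2\s_3$ and $\s_9\s_7=S_1\s_4+a_1\s_5+b_1\s_6$, with the vanishing of $\psi_1,\psi_2$ on the opposite fiber yielding the $=0$ in (15) and the corrected subscripts $s_1,s_2$. Your extra computation identifying $\chi$ with the action on the span of the all-ones vectors to obtain the value $S_1S_2$ in (13) is a detail the paper leaves implicit, and it is carried out correctly.
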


\begin{proof}
  The equation (13) is by $\chi(\s_7\s_9)=\chi(\s_9\s_7)$,
  (14) by $\varphi(\s_7\s_9)=\varphi(\s_9\s_7)$,
  and (15) is by $\psi_1(\s_7\s_9)=\psi_1(\s_9\s_7)=0$ and
  $\psi_2(\s_9\s_7)=\psi_2(\s_7\s_9)=0$.
\end{proof}

\section{Remarks}

\begin{rem}
  In \cite[page 414, line 18]{Higman1988}, $p_{75}^5$ must be $p_{75}^7$.
\end{rem}

\begin{rem}
  In \cite[page 416, Table 1]{Higman1988}, we must exchange $\mu_i$ and $\lambda_i$.
\end{rem}

\bibliographystyle{amsplain}
\providecommand{\bysame}{\leavevmode\hbox to3em{\hrulefill}\thinspace}
\providecommand{\MR}{\relax\ifhmode\unskip\space\fi MR }
\providecommand{\MRhref}[2]{%
  \href{http://www.ams.org/mathscinet-getitem?mr=#1}{#2}
}
\providecommand{\href}[2]{#2}

\end{document}